\documentclass[12pt,reqno]{amsart}
\usepackage{amssymb}
\usepackage{mathrsfs}
\usepackage{a4}
\usepackage{hyperref}
\usepackage{xr}

\newcommand{\heute}{10 December 2011}

\theoremstyle{plain}
\newtheorem{theorem}{Theorem}[section]

\newtheorem{lemma}[theorem]{Lemma}

\newtheorem{proposition}[theorem]{Proposition}

\theoremstyle{remark}

\newtheorem{hypothesis}[theorem]{Hypothesis}

\newtheorem*{rk}{Remark}

\newenvironment{namedThm}[1]{\begin{trivlist} \item[\hskip\labelsep
\textbf{#1}]\em}{\end{trivlist}}

\newcommand{\enref}[1]{\textup{(\ref{enum:#1})}}
\newcommand{\dashTwo}[1]{\textup{(\ref{two}${}'$)}}

\newcommand{\ignore}[1]{}

\newcommand{\f}[1][p]{\mathbb{F}_{#1}}

\newcommand{\Gro}[1]{Gr\"ob\-ner}

\newcommand{\GL}{\mathit{GL}}

\newcommand{\pp}[1][G,V]{\mathscr{P}(#1)}

\newcommand{\abs}[1]{\left|#1\right|}
\newcommand{\scr}[1]{\mathscr{#1}}

\newcommand{\xc}{\sigma_x}
\newcommand{\yc}{\sigma_y}
\newcommand{\zc}{\sigma_z}
\newcommand{\tc}{\tau}

\begin{document}

\title[Oliver example]{Weak closure and Oliver's $p$-group conjecture}
\author[D.~J. Green]{David J. Green}
\address{Department of Mathematics \\
Friedrich-Schiller-Universit\"at Jena \\ 07737 Jena \\ Germany}
\email{david.green@uni-jena.de}
\author[J.~Lynd]{Justin Lynd}
\address{Department of Mathematics \\ The Ohio State University \\
Columbus, OH 43210}
\email{jlynd@math.ohio-state.edu}
\keywords{$p$-group; offending subgroup; quadratic offender;
$p$-local finite group}
\subjclass[2000]{Primary 20D15}
\date{\heute}

\begin{abstract}
To date almost all verifications of Oliver's $p$-group conjecture have proceeded by verifying a stronger conjecture about weakly closed quadratic subgroups. We construct a group of order~$3^{49}$ which refutes the weakly closed conjecture but satisfies Oliver's conjecture.
\end{abstract}

\maketitle

\section{Introduction}
\noindent
Chermak has shown that every saturated fusion system has a unique centric linking system~\cite{Chermak:Localities}\@. Both Chermak and Oliver in his subsequent proof of the higher limits conjecture~\cite{Oliver:Chermak} invoke the general FF-module theorem~\cite{MeierfStellm:GeneralFF}, which depends on the classification of finite simple groups.

In~\cite{Oliver:MartinoPriddyOdd}, Oliver formulated a conjecture about $p$-groups and gave a classification-free proof that it would imply both the higher limits conjecture and Chermak's theorem at odd primes. Oliver's $p$-group conjecture has been verified in several cases, almost all of which involve verifying the stronger Quadratic Conjecture:

\begin{namedThm}{Quadratic Conjecture (Conjecture 1.4~of \cite{ol3})}
Let $p$~be a prime, $G$ a finite $p$-group, and $V$ a faithful
$\f G$-module. If $V$ is an $F$-module then there is a quadratic element in $\Omega_1(Z(G))$.
\end{namedThm}

\noindent
The Quadratic Conjecture implies Oliver's Conjecture by \cite[Theorem 1.5]{ol3}\@. In \cite[Proposition~4.5]{ol3} it is shown that the Quadratic Conjecture in turn follows from another conjecture. Indeed, up till now all verifications of the Quadratic Conjecture have proceeded by verifying this Weakly Closed Conjecture:

\begin{namedThm}{Weakly Closed Conjecture (Conjecture 4.6~of \cite{ol3})}
Let $p$~be a prime, $G$ a finite $p$-group, and $V$ a faithful
$\f G$-module. If there is an elementary abelian subgroup
$1 \neq E \leq G$ which is both quadratic on~$V$ and weakly closed in
$C_G(E)$ with respect to~$G$,
then there are quadratic elements in $\Omega_1(Z(G))$.
\end{namedThm}

\noindent
In this paper we present a counterexample to the Weakly Closed Conjecture which still satisfies the Quadratic Conjecture.

\begin{theorem}
\label{thm:main}
There is a $3$-group $G$ of order $3^{49}$ with the following properties:
\begin{enumerate}
\item
\label{enum:main-1}
The Weakly Closed Conjecture fails for one faithful $\f[3]G$-module $V_0$.
\item
\label{enum:main-2}
Every faithful $\f[3]G$-module $V$ satisfies the Quadratic Conjecture.
\end{enumerate}
\end{theorem}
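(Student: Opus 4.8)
The plan is to construct $G$ explicitly as a $3$-group and then exhibit a single elementary abelian subgroup $E$ that is quadratic and weakly closed in its centralizer, yet with no quadratic element in $\Omega_1(Z(G))$ for the chosen module $V_0$, thereby refuting the Weakly Closed Conjecture; separately I must verify the positive statement that \emph{every} faithful module satisfies the Quadratic Conjecture. Since the counterexample and the positive assertion pull in opposite directions, the construction has to be delicate: $G$ must be rich enough to support a weakly closed quadratic $E$ away from the center, but $\Omega_1(Z(G))$ must still contain a quadratic element against any faithful $F$-module. I would therefore not start from an abstract wish list but from a concrete module-theoretic template---most naturally an $\f[3]$-representation of a group built from $\GL$ data or a Weyl-module-type construction---chosen so that quadratic offenders can be located by linear algebra.

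First I would fix the module $V_0$ and its $G$-action, writing down generators of $G$ as unipotent matrices so that ``quadratic on $V_0$'' becomes the condition $(g-1)^2 = 0$ and ``$F$-module'' becomes an explicit offending-subgroup inequality $\dim V_0/C_{V_0}(A) \le \dim A$ realized by some abelian $A \le G$. Next I would identify the candidate $E$, verify that every nonidentity element acts quadratically (an $(g-1)^2=0$ check on the generators of $E$), and then prove the weak closure condition: that the only $G$-conjugate of $E$ lying inside $C_G(E)$ is $E$ itself. This weak-closure verification is where I expect the real effort to lie---it requires understanding the $G$-fusion of $E$, i.e.\ controlling $N_G(C_G(E))$-conjugacy, and showing no outside conjugate slips into the centralizer; in a group of order $3^{49}$ this is a structural commutator computation rather than a one-line argument, and getting $E$ weakly closed \emph{without} accidentally forcing a quadratic element into the center is the crux of the whole example.

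For part~\enref{main-1} I would then show $\Omega_1(Z(G))$ contains \emph{no} element quadratic on $V_0$: concretely, compute $\Omega_1(Z(G))$ and check that $(z-1)^2 \neq 0$ on $V_0$ for every nonidentity central $z$ of order $3$. Combined with the weakly closed quadratic $E$ found above and the fact that $V_0$ is a faithful $F$-module, this contradicts the conclusion of the Weakly Closed Conjecture, establishing the failure. For part~\enref{main-2} the logic reverses: given an \emph{arbitrary} faithful $F$-module $V$, I must produce a quadratic element of $\Omega_1(Z(G))$. Here I would exploit the rigidity of $G$'s structure---its center, its Frattini quotient, and the action of $\Aut(G)$ on $\Omega_1(Z(G))$---to reduce the infinitely many possible $V$ to finitely many essential cases, using that an $F$-module structure forces an offending subgroup whose commutators land in a controlled part of $Z(G)$. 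The main obstacle in~\enref{main-2} is the universal quantifier over all faithful modules $V$: unlike~\enref{main-1}, I cannot simply exhibit one witness, so I would need a general argument (likely an analysis of which $\Zp$- or $\f[3]$-lattices can carry a faithful $F$-module action together with the constraint that $\Omega_1(Z(G))$ is small and its nontrivial elements are forced to act quadratically on any such $V$). I expect this step to be both the longest and the one most dependent on the specific arithmetic of the $3^{49}$ construction.
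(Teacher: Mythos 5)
There is a genuine gap: your text is a strategy outline in which every substantive step is deferred (``write down generators'', ``identify the candidate $E$'', ``this is where I expect the real effort to lie''), but for an existence theorem the construction \emph{is} the proof, and you never produce a candidate for $G$, $V_0$, or $E$. The paper's choices are quite specific: $G = H \rtimes Q$ where $H$ is the free class-two exponent-$3$ group on the $9$-element set $\Omega = \{x_i,y_i,z_i\}$ (a special group of order $3^{45}$ with $Z(H)=H'$) and $Q = C_3 \wr C_3$ permutes $\Omega$; the module is $V_0 = \bigoplus V_1(a,b)\otimes V_1(c,d)$ over $4$-tuples of distinct generators, where $V_1$ is the $3$-dimensional unipotent representation of the extraspecial group $E(a,b)$; and $E = \langle x_1, [x_1,a] \mid a \in \Omega \rangle$, whose quadraticity and weak closure follow by short inspection of this structure (Theorem~\ref{thm:V}), not by a heavy fusion analysis. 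Moreover your plan contains an internal inconsistency: you invoke ``the fact that $V_0$ is a faithful $F$-module'' to refute the Weakly Closed Conjecture, but that conjecture carries no $F$-module hypothesis, and in fact $V_0$ must \emph{not} be an $F$-module --- if it were, then $V_0$ (faithful, with no central quadratics) would itself violate the Quadratic Conjecture, contradicting part~\enref{main-2}.

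For part~\enref{main-2} your proposed mechanism --- classifying which $\f[3]$-lattices can carry a faithful $F$-module action and reducing to ``finitely many essential cases'' --- is not viable for arbitrary faithful modules over a group of order $3^{49}$ and is not what is needed. The paper's argument is uniform in $V$ and, crucially, turns weak closure from the refuted hypothesis of part~\enref{main-1} into the lever for part~\enref{main-2}: if $V$ had an offender but no central quadratics, then by \cite[Theorem 4.5]{ol3} one may take a \emph{quadratic} offender $E$ weakly closed in $C_G(E)$; the Descent Lemma, applied to commutators with explicit $Q$-orbit products, forces every quadratic element into $H$ (Lemma~\ref{lemma:ZG}\,\enref{ZG-2}); since $H$ is special with $C_H(g) = \langle g, H'\rangle$ for $g \in H \setminus H'$, any elementary abelian $E \le H$ satisfies $\abs{E : E \cap H'} \le 3$; and Proposition~\ref{prop:NA} (proved via Timmesfeld's Replacement Theorem, the Meierfrankenfeld--Stellmacher inequality, and the Normal Abelian Lemma) shows that no such nearly-central offender can be weakly closed in its centralizer --- a contradiction. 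None of these ingredients (the reduction of \cite{ol3}, the Descent Lemma, the special-group constraint, or the technical non-weak-closure proposition) appears in your proposal, so the universal quantifier in part~\enref{main-2} remains unaddressed.
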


\noindent
The group $G$ is constructed in Section~\ref{section:group} and the module $V_0$ in Section~\ref{section:module}\@.
Part \enref{main-1} of Theorem~\ref{thm:main} is proved as Theorem~\ref{thm:V}\@. Part~\enref{main-2} is proved as Proposition~\ref{prop:verify} using Proposition~\ref{prop:NA}\@.

\begin{rk}
It may aid the reader if we recall the exact relationship between the Quadratic Conjecture and the original form of Oliver's $p$-group conjecture. Let $S$ be a finite $p$-group. In \cite[Definition~3.1]{Oliver:MartinoPriddyOdd} Oliver defines a characteristic subgroup $\mathfrak{X}(S) \leq S$, and conjectures that $J(S) \leq \mathfrak{X}(S)$ always holds at odd primes. Here $J(S)$ is the Thompson subgroup generated by the elementary abelian subgroups of greatest order. Section~2 of~\cite{ol3} modifies Oliver's construction and introduces a second characteristic subgroup $\mathcal{Y}(S) \leq S$, with $\mathcal{Y}(S) = S$ for $p=2$ and $\mathcal{Y}(S) \leq \mathfrak{X}(S)$ for odd~$p$\@. Hence the conjecture $J(S) \leq \mathcal{Y}(S)$ is a strengthening of Oliver's conjecture. Theorem 1.5~of \cite{ol3} states that the Quadratic Conjecture is equivalent to the conjecture $J(S) \leq \mathcal{Y}(S)$. More specifically, if $S$ is a counterexample to $J(S) \leq \mathcal{Y}(S)$ then $G = S/\mathcal{Y}(S)$ and $V = \Omega_1 (Z(\mathcal{Y}(S)))$ give a counterexample to the Quadratic Conjecture; and if $(G,V)$ is a counterexample to the Quadratic Conjecture then $S = V \rtimes G$ is a counterexample to $J(S) \leq \mathcal{Y}(S)$.

If $p=3$ then the Quadratic Conjecture is equivalent to Oliver's $p$-group conjecture, as the definitions of $\mathfrak{X}(S)$ and $\mathcal{Y}(S)$ coincide for $p=3$.
\end{rk}

\section{Notation and known results}
\noindent
Let $G$ be a finite $p$-group and $V$ a faithful right $\f G$-module. Our notation for commutators is $[x,y] = x^{-1} y^{-1} x y$ and $[x_1,\ldots,x_{n-1},x_n] = [[x_1,\ldots,x_{n-1}],x_n]$. We often view $V,G$ as subgroups of the semidirect product $V \rtimes G$, giving meaning to expressions such as $[v,g]$ and $C_V(H)$ for $v \in V$, $g \in G$ and $H \leq G$. Note that $[v,g] = v(g-1)$ and $C_V(H) = V^H$.

An element $g \in G$ is called quadratic if its action on~$V$ has minimal polynomial $(X-1)^2$. An elementary abelian subgroup $1 \neq E \leq G$ is called quadratic if $[V,E,E] = 0$, or equivalently if every $1 \neq g \in E$ is quadratic (see Corollary 3.2 of~\cite{ol3})\@.
An elementary abelian subgroup $1 \neq E \leq G$ is called an offender if $j_E(V) \geq 1$, where
\[
j_H(V) = \frac{\abs{H} \cdot \abs{C_V(H)}}{\abs{V}}
\]
for any subgroup $H \leq G$. If there is an offender then $V$ is called an $F$-module. Finally the set $\pp$ of best offenders is defined by
\[
\pp = \{ \text{$E \leq G$} \mid \text{$E$ an offender and $j_F(V) \leq j_E(V)$ for all $F \leq E$} \} \, .
\]
Note that every offender contains a best one.

\begin{namedThm}{Timmesfeld's Replacement Theorem} \emph{(\cite[Theorem~2]{Chermak:QuadraticAction})} \\
Let $G$ be a $p$-group and $V$ a faithful $\f G$-module. Suppose that $E \in \pp$. Then $F = C_E([V,E])$ is a quadratic best offender, and $j_F(V) = j_E(V)$.
\end{namedThm}

\begin{namedThm}{Meierfrankenfeld--Stellmacher Lemma} \emph{(\cite[Lemma 2.6]{MeierfStellm:OtherPGV})} \\
Let $G$ be a $p$-group and $V$ a faithful $\f G$-module. For $H,K \leq G$ one has
\[
j_H(V) j_K(V) \leq j_{H \cap K}(V) j_{\langle H,K \rangle}(V)
\]
with equality if and only if $\langle H, K \rangle = HK$ and $C_V(H \cap K) = C_V(H) + C_V(K)$.
\end{namedThm}

\begin{namedThm}{Normal Abelian Lemma} \emph{(\cite[Theorem 1.5]{newol})} \\
Let $G$ be a $p$-group and $V$ a faithful $\f G$-module with no central quadratics. Suppose that $A \trianglelefteq G$ is an abelian normal subgroup. Then no offender lies in~$A$.
\end{namedThm}

\noindent
Finally we require a slight strengthening of \cite[Lemma~4.1]{oliver}\@.

\begin{namedThm}{Descent Lemma (\cite{oliver})}
Suppose that $p$ is an odd prime, that $G \neq 1$ is a finite $p$-group, and that
$V$ is a faithful $\f G$-module. Suppose that $a,b \in G$ are such that $1 \neq 
c := [a,b]$ lies in $C_G(a,b)$. Suppose further that $b$ is quadratic. Then $\langle b,c \rangle$ is quadratic too.
\end{namedThm}

\begin{proof}
$c$ is quadratic by the statement of \cite[Lemma~4.1]{oliver}\@. So by \cite[Corollary 3.2]{ol3} it suffices to prove that $[V,b,c]=0$. This follows from the assertion $\beta \gamma =0$ in the proof of \cite[Lemma~4.1]{oliver}.
\end{proof}

\section{A technical result on offenders}

\begin{hypothesis}
\label{hypo:NA}
Let $G$ be a finite $p$-group.
Suppose that $A,N \trianglelefteq G$ with $A$ abelian, $A \leq N$, $[N,N] \leq A$ und $[N,A] = 1$. Suppose that $V$ is a faithful $\f G$-module and that there are no quadratic elements in $Z(G)$.
Denote by $\mathcal{E}$ the set of all offenders $E \leq N$ such that $\abs{E \colon E \cap A} = p$.
\end{hypothesis}

\begin{proposition}
\label{prop:NA}
Suppose that Hypothesis~\ref{hypo:NA} is satisfied. Then no $E \in \mathcal{E}$ is weakly closed in $C_G(E)$ with respect to $G$.
\end{proposition}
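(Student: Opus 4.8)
The plan is to argue by contradiction: assume some $E \in \mathcal{E}$ is weakly closed in $C_G(E)$ with respect to $G$ and derive a contradiction. Write $E_0 = E \cap A$ and fix $x \in E \setminus A$, so that $E = \langle x \rangle E_0$ with $x^p = 1$. Since $[N,A] = 1$ we have $A \le Z(N)$; in particular $E_0 \le Z(N)$ and $x$ centralises $A$. The backbone of the argument is a clean conjugation criterion: for every $g \in G$,
\[
E^g \le C_G(E) \iff [x, x^g] = 1 .
\]
Indeed $E^g = \langle x^g \rangle E_0^g$ lies in $N^g = N$, so $E_0 \le Z(N)$ gives $[E^g, E_0] = 1$ automatically, while $E_0^g \le A$ together with $x \in N$ and $[A,N]=1$ gives $[E_0^g, x] = 1$; thus $[E^g, E] = 1$ collapses to $[x^g, x] = 1$. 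A parallel computation shows that $E^g = E$ holds exactly when $x^g \in E$ and $E_0^g = E_0$.

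Next I feed $g \in N$ into the criterion. Here $x^g = x[x,g]$ with $[x,g] \in [N,N] \le A \le C_G(x)$, so $x^g \in C_G(x)$, i.e.\ $[x, x^g] = 1$, and hence $E^g \le C_G(E)$ for every $g \in N$. Weak closure then forces $E^g = E$, so $[N,x] \le E \cap A = E_0$ and $E \trianglelefteq N$; moreover the criterion now tells us that any $G$-conjugate of $x$ commuting with $x$ already lies in $E$. The target becomes to manufacture a single conjugate $x^g$ with $[x, x^g] = 1$ but $x^g \notin E$ (equivalently $E^g \neq E$), which would contradict weak closure.

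Now consider the normal closure $D = \langle x^G \rangle A \trianglelefteq G$. It sits inside $N$ and has class at most $2$ with $D' \le A \le Z(D)$, and $E \le D$. If $D$ were abelian, then $D$ would be an abelian normal subgroup of $G$ containing the offender $E$, contradicting the Normal Abelian Lemma; so $D$ is non-abelian and there is $g \in G$ with $1 \neq c := [x, x^g]$. Since $c \in D' \le A \le Z(N)$ centralises both $x$ and $x^g$, the quadraticity assertion of Oliver's Lemma~4.1 (the part quoted inside the proof of the Descent Lemma) shows that $c$ is quadratic.

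It remains to convert this quadratic element $c \in A$ into a quadratic element of $Z(G)$, contradicting the standing hypothesis, and I expect this promotion to the centre to be the main obstacle: a single commutator $c$ need not be $G$-central, and one cannot simply average, since a product of quadratic elements need not be quadratic. I would bring in two extra inputs to force the issue. On the module side, the offender inequality $j_E(V)\ge 1$ combined with the Normal Abelian Lemma applied to $E_0 \le A$ (which gives $j_{E_0}(V)<1$) yields $C_V(E) = C_V(E_0)$, so that $x$, and likewise each of its conjugates, acts trivially on $C_V(E_0)$; this controls how the quadratic directions of $A$ are positioned inside $V$. On the group side, I would apply the Descent Lemma repeatedly, now with the quadratic $c$ in the role of the quadratic generator, to propagate quadraticity along commutators into the elementary abelian normal subgroup $\langle c^G \rangle \le A$ and to land on an element of $Z(G) \cap \langle c^G\rangle$. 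Arranging the final propagation so that the central element one reaches is genuinely quadratic — rather than merely a product of quadratics — is the delicate point on which the whole argument turns.
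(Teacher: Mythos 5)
Your opening moves are correct and in fact run parallel to the first paragraph of the paper's own proof: the commuting criterion $E^g \le C_G(E) \iff [x,x^g]=1$, the consequence $E \trianglelefteq N$, and the production of some $g$ with $1 \neq c = [x,x^g] \in A$ (the paper gets this from $E \ntrianglelefteq G$ plus weak closure; your route via non-abelianness of $\langle x^G \rangle A$ and the Normal Abelian Lemma works equally well). Likewise your deduction $C_V(E)=C_V(E_0)$ from $j_E(V)\ge 1$ and $j_{E_0}(V)\le p^{-1}$ is exactly Lemma~\ref{lemma:preNA}\,\enref{preNA-2}. But there is already a gap at the Descent Lemma step: its hypotheses require one of the two commuting elements to be \emph{quadratic}, and an arbitrary $x \in E \setminus A$ need not be. The paper repairs this before it starts, via Timmesfeld's Replacement Theorem: one chooses $x \in C_E([V,E]) \setminus A$ (nonempty because $C_E([V,E])$ is an offender, hence not contained in $A$), which makes $x$ quadratic and, crucially, gives the stronger property $[V,x] \le C_V(E)$ --- which your observation $C_V(E)=C_V(E_0)$ does not supply.

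The decisive gap, however, is the endgame, which you candidly leave open: there is no mechanism under Hypothesis~\ref{hypo:NA} to ``promote'' the quadratic $c \in A$ to a quadratic element of $Z(G)$. Since $[N,A]=1$, every element of $N$ commutes with every conjugate of $c$, so the nontrivial commutators needed to feed further applications of the Descent Lemma would have to involve elements outside $N$, where nothing guarantees the centralizing condition $[a,b] \in C_G(a,b)$; and products over orbits of quadratic elements are in general not quadratic --- indeed, in the very group of Section~\ref{section:group} the Descent Lemma is used to show the central orbit products are \emph{non}-quadratic. The paper's proof avoids producing a central quadratic altogether and instead contradicts \emph{faithfulness}: take $E$ of maximal order among weakly closed members of $\mathcal{E}$; apply weak closure to $E^{x^g}$ and to $(E^g)^x$ to get $1 \neq [x,x^g] \in F \cap F^g$ where $F = E \cap A$; apply the Meierfrankenfeld--Stellmacher Lemma to $E$ and $F^g$ (with $j_{E \cap F^g}(V) \le p^{-1}$ from the Normal Abelian Lemma) to conclude $EF^g \in \mathcal{E}$; then Lemma~\ref{lemma:preNA2} together with maximality forces $EF^g = E$, hence $F^g = F$ and $C_V(E^g) = C_V(F^g) = C_V(F) = C_V(E)$. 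Combined with $[V,x] \le C_V(E)$ this gives $[V,x,x^g] = [V,x^g,x] = 0$, so $[x,x^g] \neq 1$ acts trivially on $V$, contradicting faithfulness. Your proposal has no counterpart to the maximality setup or to Lemma~\ref{lemma:preNA2}, and without them the equality $F^g = F$ --- the pivot of the whole argument --- is out of reach.
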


\noindent
Before we prove the proposition we need to establish three lemmas.

\begin{lemma}
\label{lemma:noRankOne}
Let $G$ be a finite $p$-group and $V$ a faithful $\f G$-module with no central quadratics. Then there is no offender $E \leq G$ with $\abs{E} = p$.
\end{lemma}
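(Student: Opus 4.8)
The plan is first to identify what an order-$p$ offender must look like. If $E=\langle g\rangle$ has order $p$, then $j_E(V)=p\,\abs{C_V(g)}/\abs{V}\ge 1$ forces $\abs{C_V(g)}\ge\abs{V}/p$, so $\rank(g-1)\le 1$; faithfulness together with $g\ne 1$ makes the rank exactly $1$, and since $g^p=1$ gives $(g-1)^p=0$ a rank-one nilpotent operator squares to zero. Hence $(g-1)^2=0$: the element $g$ is a quadratic transvection and $j_E(V)=1$. So the statement reduces to showing that a faithful $p$-group $G$ containing a transvection must have a quadratic element in $Z(G)$, contradicting the hypothesis. For $p=2$ this is automatic, since any involution in the nontrivial $2$-group $Z(G)$ is quadratic in characteristic $2$; so the hypothesis ``no central quadratics'' already fails, and I may assume $p$ is odd and invoke the Descent Lemma.

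The elementary fact I would isolate is that two commuting transvections $s=1+a$ and $t=1+b$ (with $a,b$ of rank one and $a^2=b^2=0$) satisfy $ab=ba=0$: commuting excludes the configuration in which one centre meets the other's axis, and instead forces each centre into the other's axis. Consequently any abelian subgroup generated by transvections is elementary abelian and consists entirely of quadratic elements, as all mixed products of the corresponding nilpotents vanish. I would use this in two ways. First, if $[V,g]$ is $G$-invariant then $A=\{x\in G:[V,x]\subseteq[V,g]\}$ is an abelian normal subgroup whose nonidentity elements are all transvections and which contains the offender $\langle g\rangle$; dually, if $C_V(g)$ is $G$-invariant one uses the pointwise stabiliser of $C_V(g)$. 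In either case the Normal Abelian Lemma gives the contradiction. Second, whenever a transvection $\tau$ lies in $Z(M)$ for some $M\trianglelefteq G$, all of its $G$-conjugates again lie in $Z(M)$ and hence commute, so $\langle\tau^G\rangle$ is an abelian normal subgroup containing the order-$p$ offender $\langle\tau\rangle$, once more contradicting the Normal Abelian Lemma.

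It remains to treat the case in which neither $[V,g]$ nor $C_V(g)$ is $G$-invariant, and here I would induct on $\abs{G}$. Choose a maximal subgroup $M\trianglelefteq G$ with $g\in M$; then $\langle g\rangle$ is an order-$p$ offender for the faithful $M$-module $V$, so by induction $Z(M)$ contains a quadratic element. Writing $G/M=\langle\bar x\rangle\cong C_p$, I would analyse the induced $C_p$-action on the elementary abelian normal subgroup $\Omega_1(Z(M))$: if a quadratic element of $Z(M)$ is fixed by $x$ it already lies in $Z(G)$ and we are done, and otherwise I would pass to an element $b$ in $\ker(\bar x-1)^2\setminus\ker(\bar x-1)$ and apply the Descent Lemma with this $b$ and $a=x$. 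Since $b\in\ker(\bar x-1)^2$, the commutator $c=[x,b]$ is $x$-fixed and lies in $Z(M)$, so $c\in C_G(x,b)$ and in fact $c\in Z(G)$; the Descent Lemma then produces a nonzero central quadratic $c$.

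The step I expect to be the main obstacle is precisely this promotion across $M$. The Descent Lemma only yields a central quadratic when the element $b$ fed into it is itself quadratic, and selecting $b$ inside $\ker(\bar x-1)^2$ forces one to form products of commuting $G$-conjugates from a Jordan block of $\bar x-1$. This is exactly where the commuting-transvection identity must be brought to bear, to guarantee that the resulting $b$ is still quadratic rather than merely an element of a quadratic group; controlling this interaction between the $C_p$-action and the quadraticity condition, rather than the individual applications of the Descent Lemma or the Normal Abelian Lemma, is the delicate point of the argument.
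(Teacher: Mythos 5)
Your opening reductions are sound: an order-$p$ offender is generated by a transvection, the $p=2$ case is vacuous, the identity $ab=ba=0$ for commuting transvections $1+a$, $1+b$ is correct, and the two cases where $[V,g]$ or $C_V(g)$ is $G$-invariant are handled correctly via the Normal Abelian Lemma. But the proof is not complete, and the step you yourself flag as ``the main obstacle'' is a genuine gap rather than a routine verification. The Descent Lemma requires the input $b$ to be quadratic, and your candidate $b \in \ker(\bar{x}-1)^2 \setminus \ker(\bar{x}-1)$ inside $\Omega_1(Z(M))$ would have to be manufactured from the one quadratic element that the inductive hypothesis supplies --- in practice as a product of commuting $G$-conjugates of it. That route is blocked: quadratic elements of $Z(M)$ do not form a subgroup, and a product of two commuting quadratics need not be quadratic. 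Indeed Lemma~\ref{lemma:V2} of this very paper exhibits exactly this failure ($[a_1,a_2]$ and $[b_1,b_2]$ are commuting quadratics whose product is not quadratic), and the whole counterexample of Theorem~\ref{thm:V} turns on central orbit products of commuting quadratics being non-quadratic. Your proposed rescue via the commuting-transvection identity does not apply, because the central quadratic produced by induction is in no way a transvection --- only the original offender $g$ is one, and $g$ need not lie in $Z(M)$. So the inductive promotion across $M$ is exactly the assertion that remains unproved.

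The paper avoids induction entirely and argues directly, using the symmetric relation $g \perp h$ (meaning $[g,h]=1$ and $[V,g,h]=0$) from \cite[Lemma~3.1]{ol3}. With $E = \langle x \rangle$ and $C_V(x)$ of codimension one, pick $1 \neq y \in \Omega_1(Z(G)) \cap \langle x^G \rangle$. If $x \not\perp y$, there is $v_0$ with $[v_0,y,x] \neq 0$; since $[x,y]=1$ also $[v_0,x,y] \neq 0$, and nilpotency of $v \mapsto [v,y]$ forces $v_0$ and $[v_0,y]$ to be linearly independent modulo $C_V(x)$, contradicting codimension one. Hence $x \perp y$; then $y^{\perp}$ is a normal subgroup of $C_G(y) = G$ containing $x$, hence containing $\langle x^G \rangle \ni y$, so $y \perp y$ and $y$ is a central quadratic --- contradiction. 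Note that this $\perp$-machinery is precisely the substitute for the closure property (products of commuting quadratics being quadratic) that your sketch would need and which is false in general: normality of $y^{\perp}$ transports quadraticity along the normal closure without ever forming products. If you want to salvage your approach, you would have to replace the Descent Lemma step by an argument of this kind rather than by choosing a better $b$.
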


\begin{proof}
Recall from \cite[Section~3]{ol3} that we write $g \perp h$ if $[g,h] = 1$ and $[V,g,h] = 0$. This relation is symmetric.
Suppose that $E = \langle x \rangle$ is an offender. Since $x$ acts nontrivially and $j_E(V) \geq 1$ it follows that $C_V(E) = C_V(x)$ has codimension~$1$ in the $\f$-vector space~$V$.
Pick $y \neq 1$ to lie in the intersection of $\Omega_1(Z(G))$ with the normal closure of~$E$. If $x \not \perp y$ then $[v_0,y,x] \neq 0$ for some $v_0 \in V$. But then $v_0$ and $[v_0,y]$ are linearly independent in $V/C_V(x)$, since $v \mapsto [v,y]$ is nilpotent and $[v_0,y,x] = [v_0,x,y] \neq 0$. This contradicts codimension~$1$. We conclude that $x \perp y$.

Now consider $y^{\perp} = \{g \in G \mid g \perp y \}$. By \cite[Lemma 3.1\,(3)]{ol3} this is a normal subgroup of $C_G(y) = G$. Since $x \in y^{\perp}$ it follows by choice of~$y$ that $y \in y^{\perp}$. So $y$ is quadratic by \cite[Lemma 3.1\,(1)]{ol3}, contradicting the assumption that there are no central quadratics.
\end{proof}

\begin{lemma}
\label{lemma:preNA}
Suppose that Hypothesis~\ref{hypo:NA} is satisfied. Let $E \in \mathcal{E}$ and set $F = E \cap A$. Then
\begin{enumerate}
\item
\label{enum:preNA-1}
$F \neq 1$
\item
\label{enum:preNA-2}
$j_E(V) = 1$, $j_F(V) = p^{-1}$ and $C_V(E)=C_V(F)$.
\item
\label{enum:preNA-3}
$E \in \pp$.
\item
\label{enum:preNA-4}
The set $C_E([V,E]) \setminus A$ is non-empty. Each $x \in C_E([V,E]) \setminus A$ satisfies $E = \langle x, F \rangle$ and $[V,x] \leq C_V(E)$.
\end{enumerate}
\end{lemma}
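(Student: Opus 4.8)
The plan is to establish the four parts in sequence, each feeding into the next. Throughout set $F = E \cap A$; since $E$ is elementary abelian and $\abs{E \colon F} = p$, the subgroup $F$ is maximal in $E$, and $F \leq A$ with $A$ abelian and normal. Part \enref{preNA-1} is then immediate: if $F = 1$ then $\abs{E} = \abs{E \colon E \cap A} = p$, so $E$ would be an offender of order~$p$, contradicting Lemma~\ref{lemma:noRankOne}. For part \enref{preNA-2}, the Normal Abelian Lemma applied to the nontrivial subgroup $F \leq A$ shows that $F$ is not an offender, so $j_F(V) < 1$; as every $j$-value is a power of~$p$ this gives $j_F(V) \leq p^{-1}$. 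On the other hand $C_V(E) \leq C_V(F)$, and writing $p^{-k} = \abs{C_V(E)}/\abs{C_V(F)}$ with $k \geq 0$ the index formula yields $j_E(V) = p^{1-k} j_F(V)$. Since $E$ is an offender we have $j_E(V) \geq 1$; combined with $j_F(V) \leq p^{-1}$ this forces $k = 0$, whence $C_V(E) = C_V(F)$, $j_F(V) = p^{-1}$ and $j_E(V) = 1$.

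For part \enref{preNA-3} I must check that $j_{F'}(V) \leq j_E(V) = 1$ for every $F' \leq E$. If $F' \leq A$ this follows from the Normal Abelian Lemma (and is trivial when $F' = 1$). If $F' \not\leq A$, then maximality of $F$ gives $\langle F', F \rangle = E$ and $F' \cap F = F' \cap A$, so the Meierfrankenfeld--Stellmacher Lemma applied to $F'$ and $F$ yields $j_{F'}(V)\, j_F(V) \leq j_{F' \cap A}(V)\, j_E(V)$, that is, $j_{F'}(V) \leq p \cdot j_{F' \cap A}(V)$. When $F' \cap A \neq 1$ the Normal Abelian Lemma bounds $j_{F' \cap A}(V) \leq p^{-1}$ and we are done; when $F' \cap A = 1$ we have $\abs{F'} = p$ and Lemma~\ref{lemma:noRankOne} applies. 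Hence $E \in \pp$.

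Finally, for part \enref{preNA-4}, since $E \in \pp$ Timmesfeld's Replacement Theorem shows that $D := C_E([V,E])$ is a (quadratic) offender with $j_D(V) = j_E(V) = 1$. Were $D \leq A$, it would be an offender inside the abelian normal subgroup~$A$, contradicting the Normal Abelian Lemma; thus $D \setminus A \neq \emptyset$. For $x \in D \setminus A$ we have $x \notin F$, so $E = \langle x, F \rangle$ by maximality. It remains to prove $[V,x] \leq C_V(E)$, equivalently $[V,x,E] = 0$.

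This last identity is the one genuinely delicate point, and I expect it to be the main obstacle: the defining property $x \in C_E([V,E])$ directly controls $[V,E,x]$ rather than $[V,x,E]$. The resolution is that $x$ and any $e \in E$ commute in~$G$, hence act on~$V$ as commuting linear operators, so that $[V,x,e] = [V,e,x]$; and $[V,e,x] \leq [[V,E],x] = 0$ precisely because $x$ centralizes $[V,E]$. Since $e \in E$ was arbitrary this gives $[V,x,E] = 0$, completing the proof.
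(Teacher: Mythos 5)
Your proof is correct, and in parts \enref{preNA-1}, \enref{preNA-2} and \enref{preNA-4} it matches the paper's argument: the same appeal to Lemma~\ref{lemma:noRankOne} when $F=1$; the same counting from the definition of $j_H(V)$ (the paper compresses your $k$-bookkeeping into one sentence, but it is the identical computation); and in \enref{preNA-4} the same two ingredients --- Timmesfeld's Replacement Theorem applied to $E \in \pp$, then the computation $[v,x,e] = v(x-1)(e-1) = v(e-1)(x-1) = [v,e,x] = 0$ using that $x$ and $e$ commute and that $x$ centralizes $[V,E]$ --- which is precisely the ``delicate point'' you flagged, resolved exactly as the paper resolves it. The only divergence is part \enref{preNA-3}. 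You apply the Meierfrankenfeld--Stellmacher Lemma to $F'$ and $F$ for each $F' \leq E$ with $F' \nleq A$, obtaining $j_{F'}(V) \leq p \, j_{F' \cap A}(V)$ and finishing via the Normal Abelian Lemma or Lemma~\ref{lemma:noRankOne} according as $F' \cap A$ is nontrivial or trivial. The paper instead makes the one-line observation that any offender $E_1 < E$ automatically lies in $\mathcal{E}$ (it cannot lie in $A$ by the Normal Abelian Lemma, so $\abs{E_1 \colon E_1 \cap A} = p$ is forced), whence \enref{preNA-2} applied to $E_1$ gives $j_{E_1}(V) = 1 = j_E(V)$ at once, while non-offending subgroups satisfy $j < 1$ trivially. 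Both routes are sound; the paper's exploits the closure of $\mathcal{E}$ under passing to offending subgroups (a fact also driving Lemma~\ref{lemma:preNA2}) and avoids the MS machinery here, whereas your version is slightly heavier but needs no new observation. There is no gap in either direction.
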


\begin{proof}
By the Normal Abelian Lemma, $A$ contains no offenders.
\par
\enref{preNA-1}: If $F = 1$ then $\abs{E} = p$, which cannot happen by Lemma~\ref{lemma:noRankOne}\@.
\par
\enref{preNA-2}: Follows by definition of $j_E(V)$ since $E$ offends but $1 \neq F \leq A$ does not.
\par
\enref{preNA-3}: Any offender $E_1 < E$ also lies in~$\mathcal{E}$. Apply~\enref{preNA-2}\@. \par
\enref{preNA-4}:
$E_1 = C_E([V,E])$ is a quadratic offender by Timmesfeld's Replacement Theorem. Since $E_1$ offends we have $E_1 \nleq A$. So $E = \langle x,F \rangle$ for any $x \in E_1 \setminus A$. Moreover for $e \in E$ we have
\[
[v,x,e] = v(x-1)(e-1) = v(e-1)(x-1) = [v,e,x] = 0 \, ,
\]
since $[x,e]=1$ and $[V,E,E_1] = 0$ by construction of $E_1$. So $[V,x] \leq C_V(E)$.
\end{proof}

\begin{lemma}
\label{lemma:preNA2}
Suppose that Hypothesis~\ref{hypo:NA} is satisfied and that $E,E_1 \in \mathcal{E}$ are such that $E \leq E_1$, $E \cap A \neq 1$, and $E$ is weakly closed in $C_G(E)$ with respect to $G$. Then there is $E_2 \in \mathcal{E}$ such that $E_1 \leq E_2$ and $E_2$ is weakly closed in $C_G(E_2)$ with respect to $G$.
\end{lemma}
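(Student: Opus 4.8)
The plan is to choose $E_2 \in \mathcal{E}$ maximal subject to the single condition $E_1 \leq E_2$; such an $E_2$ exists because $\mathcal{E}$ is finite and contains $E_1$. I then claim this $E_2$ already satisfies the conclusion. Note first that every member of $\mathcal{E}$ is elementary abelian, so from $E \leq E_1 \leq E_2$ we get $E_2 \leq C_G(E)$ and hence $C_G(E_2) \leq C_G(E)$. To verify weak closure, take any $g \in G$ with $E_2^g \leq C_G(E_2)$; the goal is $E_2^g = E_2$. Since $E \leq E_2$, conjugating gives $E^g \leq E_2^g \leq C_G(E_2) \leq C_G(E)$, so the weak closure of $E$ in $C_G(E)$ forces $E^g = E$.

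The main work is to show that $E_2 E_2^g$ again lies in $\mathcal{E}$. As $E_2^g$ centralizes $E_2$, these two elementary abelian subgroups commute elementwise, so $E_2 E_2^g = \langle E_2, E_2^g\rangle$ is elementary abelian and, since $N \trianglelefteq G$, contained in $N$. For the index over $A$ I would fix $x \in E \setminus A$; then $E_2 = \langle x\rangle(E_2 \cap A)$ and, using $E^g = E$, also $E_2^g = \langle x\rangle(E_2^g \cap A)$, so $E_2 E_2^g = \langle x\rangle B$ with $B = (E_2\cap A)(E_2^g\cap A) \leq A$, and $x \notin A$ gives $(E_2 E_2^g)\cap A = B$ and index $p$. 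The offender condition is the crux: by Lemma~\ref{lemma:preNA}\enref{preNA-2} both $E_2$ and its conjugate have $j$-value~$1$, and by Lemma~\ref{lemma:preNA}\enref{preNA-3} we have $E_2 \in \pp$, so $j_{E_2 \cap E_2^g}(V) \leq j_{E_2}(V) = 1$. Feeding $H = E_2$, $K = E_2^g$ into the Meierfrankenfeld--Stellmacher Lemma (with $\langle H,K\rangle = E_2 E_2^g$) then yields $1 = j_{E_2}(V)\, j_{E_2^g}(V) \leq j_{E_2 \cap E_2^g}(V)\, j_{E_2 E_2^g}(V) \leq j_{E_2 E_2^g}(V)$, so $E_2 E_2^g$ is an offender and hence lies in $\mathcal{E}$.

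Finally $E_2 E_2^g \in \mathcal{E}$ contains $E_2 \geq E_1$, so maximality of $E_2$ gives $E_2 E_2^g = E_2$; comparing orders then yields $E_2^g = E_2$, which is exactly weak closure of $E_2$ in $C_G(E_2)$ with respect to $G$. I expect the offender step to be the one requiring care: it is the only place where the structural hypotheses really enter, through the best-offender property and the Meierfrankenfeld--Stellmacher inequality, whereas the reduction to $E^g = E$ via weak closure of $E$ and the bookkeeping for the index over $A$ are routine.
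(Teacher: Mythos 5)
Your proposal is correct, and its skeleton coincides with the paper's argument: fix a maximal member of $\mathcal{E}$ above $E_1$, use weak closure of $E$ to force $E^g = E$, and use the Meierfrankenfeld--Stellmacher Lemma to manufacture a member of $\mathcal{E}$ strictly above the maximal one, a contradiction (the paper phrases this as a ``largest counterexample'' argument, which is the same device). Where you genuinely diverge is at the crucial $j$-value estimate. The paper applies the MS Lemma to the pair $(E_1, F_1^g)$ with $F_1 = E_1 \cap A$, so that the intersection $E_1 \cap F_1^g$ lies in $A$ and contains $F = E \cap A \neq 1$; the Normal Abelian Lemma then gives $j_{E_1 \cap F_1^g}(V) \leq p^{-1}$, exactly offsetting $j_{F_1^g}(V) = p^{-1}$. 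You instead apply the MS Lemma to the full pair $(E_2, E_2^g)$ and bound the intersection via the best-offender property $E_2 \in \pp$ of Lemma~\ref{lemma:preNA}\,\enref{preNA-3}, namely $j_{E_2 \cap E_2^g}(V) \leq j_{E_2}(V) = 1$, which suffices because $j_{E_2}(V)\,j_{E_2^g}(V) = 1$. Your route buys a small sharpening: the hypothesis $E \cap A \neq 1$ is never used (it is anyway automatic for members of $\mathcal{E}$ by Lemma~\ref{lemma:preNA}\,\enref{preNA-1}), whereas the paper needs it to make the Normal Abelian Lemma apply to $E_1 \cap F_1^g$. The index bookkeeping is the one place your version needs care beyond the paper's, and you handle it correctly: since $E = E^g$ contains $x \notin A$, both $E_2$ and $E_2^g$ have image $\langle xA \rangle$ in $N/A$, so $E_2 E_2^g$ has index $p$ over $A$ --- without this common element the product could have index $p^2$ and fall outside $\mathcal{E}$. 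The only line worth adding is the (trivial) observation that $\mathcal{E}$ is stable under conjugation, or equivalently $j_{E_2^g}(V) = j_{E_2}(V)$, so that Lemma~\ref{lemma:preNA}\,\enref{preNA-2} may indeed be quoted for $E_2^g$.
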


\begin{proof}
We argue by contradiction and assume that $E_1$ is a largest counterexample. Setting $F = E \cap A$ and $F_1 = E_1 \cap A$ we have $F \neq 1$, $F \leq F_1$ and $E_1 = EF_1$. As $E_1$ is not weakly closed there is $g \in G$ with $E_1^g \neq E_1$ and $[E_1,E_1^g] = 1$. Since $E$ is weakly closed and $E \leq E_1$ it follows that $E^g = E$. Hence $F = F^g \leq F_1^g$ and $F_1^g \nleq E_1$.  Set $E_3 = E_1 F_1^g > E_1$. Applying the Meierfrankenfeld--Stellmacher Lemma to $E_1,F_1^g$ we have $j_{E_3}(V) j_{E_1 \cap F_1^g}(V) \geq 1 \cdot p^{-1}$ by Lemma~\ref{lemma:preNA}\@. So since $1 \neq F \leq E_1 \cap F_1^g \leq A$ the Normal Abelian Lemma says that $j_{E_1 \cap F_1^g}(V) \leq p^{-1}$ and hence $E_3 \in \mathcal{E}$. This contradicts the maximality of~$E_1$.
\end{proof}

\begin{proof}[Proof of Proposition~\ref{prop:NA}]
We argue by contradiction and suppose that $E \in \mathcal{E}$ is weakly closed and of the largest possible order.
By Lemma~\ref{lemma:preNA}\,\enref{preNA-4} we have $E = \langle x, F \rangle$ with $x \in N \setminus A$ quadratic.
Now $E \ntrianglelefteq G$ by the Normal Abelian Lemma, so since $E$ is weakly closed there is a $g \in G$ with $[E,E^g] \neq 1$. Since $E, E^g \leq N$ and $[N,A] = 1$ it follows that $1 \neq [x,x^g] \in [N,N] \leq A$.
Once more using $[N,N] \leq A$ and $[N,A] = 1$ we deduce that $[E,E^{x^g}] = 1$. So since $E$ is weakly closed it follows that $E^{x^g} = E$ and therefore $1 \neq [x,x^g] \in F = A \cap E$. Similarly with $E_1 = E^g$ we have $E_1^x = E_1$, and hence $[x,x^g] \in F^g = A \cap E_1$. So $F \cap F^g \neq 1$.

We now apply the Meierfrankenfeld--Stellmacher Lemma to $E$ and $F^g$. Since  $1 \neq [x,x^g] \in E \cap F^g \leq A$ we have $j_{E \cap F^g}(V) \leq p^{-1}$. Hence $j_{EF^g}(V) \geq 1$ and $EF^g \in \mathcal{E}$\@. Since $E$ is maximal weakly closed by assumption, Lemma~\ref{lemma:preNA2} gives us $EF^g = E$ and hence $F^g = F$. So by Lemma~\ref{lemma:preNA}\,\enref{preNA-2} we have
\[
C_V(E^g) = C_V(F^g) = C_V(F) = C_V(E) \, ,
\]
whence by \enref{preNA-4} we have $[V,x,x^g] \leq [C_V(E),x^g] = [C_V(E^g),x^g] = 0$. And since $[v,x^g] = [vg^{-1},x]g$ we also have $[v,x^g,e^g] = [vg^{-1},x,e]g = 0$, meaning that $[V,x^g] \leq C_V(E^g)$ and hence $[V,x^g,x] \leq [C_V(E^g),x] = [C_V(E),x] = 0$.
So $[V,[x,x^g]] = 0$, contradicting the fact that $V$ is faithful and $[x,x^g] \neq 1$.
\end{proof}

\section{The group}
\label{section:group}
\noindent
The construction we describe in this section was inspired by an example of J.~L. Alperin, see \cite[p.~349]{Huppert:I} or \cite[pp.~324--5]{Glauberman:LargeAbelian}\@.
\medskip

\noindent
Let $p$ be an odd prime and $\Omega$ a finite set with at least two elements. Write $F(\Omega)$ for the free group on~$\Omega$, and let $N(\Omega) \trianglelefteq F(\Omega)$ be the normal subgroup generated by all twofold commutators $[g_1,g_2,g_3]$ and all $p$th powers $g^p$. Define $E(\Omega)$ to be the quotient group $E(\Omega) = F(\Omega)/N(\Omega)$. We shall call $E(\Omega)$ the free group of class two and exponent~$p$ on the set~$\Omega$. Note that if $p=2$ or $\abs{\Omega}=1$ then $E(\Omega)$ is abelian and therefore of class one.

\begin{lemma}
\label{lemma:Eprop}
Let $p$ be an odd prime and $\Omega$ a finite set with at least two elements. The free class two exponent $p$ group $E(\Omega)$ has the following properties:
\begin{enumerate}
\item
\label{enum:Eprop-1}
$E(\Omega)$ is indeed of class two and exponent~$p$.
\item
\label{enum:Eprop-2}
$E(\Omega)$ is special\footnote{In the sense of  \cite[p.~183]{Gorenstein:FiniteGroups}\@.}: its derived and Frattini subgroups coincide with its centre. This is elementary abelian of rank $\binom{\abs{\Omega}}2$, generated by the $[x,y]$ with $x,y \in \Omega$.
\item
\label{enum:Eprop-3}
$\abs{E(\Omega)} = p^n$ for $n = \frac12 \abs{\Omega} (\abs{\Omega}+1)$.
\item
\label{enum:Eprop-4}
$E(a,b)$ is extraspecial of order~$p^3$ and exponent~$p$.
The $\binom{\abs{\Omega}}2$ projection maps $E(\Omega) \twoheadrightarrow E(a,b)$ with $a,b \in \Omega$ detect all elements of $E(\Omega)$.
\end{enumerate}
\end{lemma}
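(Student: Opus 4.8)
The plan is to prove all four parts together by identifying $E(\Omega)$ with an explicit model group of order~$p^n$. First I would read off half of \enref{Eprop-1} directly from the defining relations: since $N(\Omega)$ contains every left-normed commutator $[g_1,g_2,g_3]$, the quotient $E(\Omega)$ satisfies $[E(\Omega),E(\Omega),E(\Omega)]=1$ and so has class at most two; and since $N(\Omega)$ contains every $p$th power $g^p$, every element of $E(\Omega)$ has order dividing~$p$. That the class and exponent are \emph{exactly} two and~$p$ will fall out once the nonabelian model below is in hand.

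Next I would establish the upper bound $\abs{E(\Omega)} \leq p^n$ by a collection argument. Writing $G = E(\Omega)$ and $Z = [G,G]$, class two forces $Z \leq Z(G)$ and makes the commutator map $\bar x,\bar y \mapsto [\bar x,\bar y]$ bilinear, so $Z$ is generated by the images of the $[\bar x,\bar y]$ with $x,y \in \Omega$; being a subgroup of an exponent-$p$ group it is elementary abelian, and using $[\bar x,\bar y]=[\bar y,\bar x]^{-1}$ together with $[\bar x,\bar x]=1$ we may index a generating set by the unordered pairs, giving $\abs{Z} \leq p^{\binom{\abs{\Omega}}{2}}$. The quotient $G/Z$ is abelian of exponent~$p$ on $\abs{\Omega}$ generators, so $\abs{G/Z} \leq p^{\abs{\Omega}}$, and multiplying yields $\abs{G} \leq p^{\abs{\Omega}+\binom{\abs{\Omega}}{2}} = p^n$.

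For the matching lower bound and the structural claims I would build a concrete group $H$ on the set $\f^{\abs{\Omega}} \times \f^{\binom{\abs{\Omega}}{2}}$ with product $(u,s)(u',s') = (u+u',\, s+s'+\beta(u,u'))$, where $\beta$ is the bilinear ``upper triangular'' form sending $(e_x,e_y)$ to the basis vector indexed by $\{x,y\}$ when $x<y$ and to~$0$ otherwise. One checks that $H$ is a group of order $p^n$; that $(u,s)^p = (0,\binom{p}{2}\beta(u,u)) = (0,0)$ because $p$ is odd, so $H$ has exponent~$p$; that $H$ has class two with $[H,H] = Z(H) = \{0\}\times \f^{\binom{\abs{\Omega}}{2}}$ because $\beta(u,u')-\beta(u',u)$ ranges over a basis; and that $H$ is generated by the $\abs{\Omega}$ elements $(e_\omega,0)$. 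Since $H$ is class two of exponent~$p$, the universal property of $E(\Omega)$ supplies a surjection $E(\Omega) \twoheadrightarrow H$, whence $\abs{E(\Omega)} \geq p^n$. Combined with the previous paragraph this forces $E(\Omega) \cong H$, which simultaneously gives \enref{Eprop-3}, the special structure of \enref{Eprop-2} (with $\Phi(G)=[G,G]$ because $G^p=1$, and $[G,G]=Z(G)$ as in~$H$), and the fact that the class and exponent are exactly two and~$p$.

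Finally, \enref{Eprop-4} is the case $\abs{\Omega}=2$ of the above, where $n=3$ and the rank-one centre makes $E(a,b)$ extraspecial of exponent~$p$. For the detection statement I would use the projections induced by the set maps $\Omega \to \{a,b\}$ that collapse all other generators to~$1$; these send the normal form $\prod_\omega \bar\omega^{a_\omega}\prod_{x<y}[\bar x,\bar y]^{b_{xy}}$ of a general element to $\bar a^{a_a}\bar b^{a_b}[\bar a,\bar b]^{b_{ab}}$ in $E(a,b)$. Using uniqueness of the normal form (now available from $E(\Omega)\cong H$), a nontrivial element has some nonzero exponent, and a suitable pair $\{a,b\}$ detects it. The main obstacle I anticipate is not any single step but the bookkeeping around the two-sided order estimate: the collection argument must be run carefully enough that the $[\bar x,\bar y]$ are seen to span $Z$ with no hidden relations assumed, while the explicit model $H$ does the real work of showing these generators are genuinely independent.
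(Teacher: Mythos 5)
Your proof is correct, but it takes a genuinely different route from the paper. The paper never constructs a model of $E(\Omega)$: it takes the existence of the extraspecial group $P$ of order $p^3$ and exponent~$p$ as known input, uses the universal property to get surjections $f_{a,b}\colon E(\Omega)\twoheadrightarrow P$ for each pair $a,b\in\Omega$, and lets this single family of detection maps do all the work --- nontriviality of $[a,b]$ gives class exactly two, linear independence of the commutators $[a,b]$ in $G'$ and of the generators modulo $G'$ gives the rank and order counts, and part \enref{Eprop-4}'s detection statement is just these maps again. You instead build an explicit cocycle model $H$ on $\f^{\abs{\Omega}}\times\f^{\binom{\abs{\Omega}}{2}}$, get the upper bound $p^n$ by collection and the lower bound from the universal surjection $E(\Omega)\twoheadrightarrow H$, and conclude $E(\Omega)\cong H$. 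Your approach is more self-contained --- the extraspecial group $E(a,b)$ drops out as the case $\abs{\Omega}=2$ rather than being presupposed, and you obtain explicit coordinates and a normal form --- at the cost of verifying the group axioms, exponent (where your $\binom{p}{2}\equiv 0 \bmod p$ computation correctly isolates the use of $p$ odd, exactly where the paper's hypothesis enters too), and centre of $H$, plus running the two-sided order estimate. Note that your detection argument for \enref{Eprop-4} essentially reconstructs the paper's maps $f_{a,b}$, so the two proofs converge there; your collection-based upper bound also makes explicit an order estimate that the paper leaves implicit in its appeal to $G/\Phi(G)$ having rank $\abs{\Omega}$. Both arguments are complete; yours trades the paper's economy for constructive transparency.
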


\begin{proof}
\enref{Eprop-1}:
Let $P = \langle x,y \rangle$ be the extraspecial group of order $p^3$ and exponent~$p$. Recall that $P' = \Phi(P) = Z(P)$ is cyclic of order~$p$, generated by $[x,y]$. As $P$ has class two and exponent~$p$, taking two distinct elements $a,b \in \Omega$ induces a group homomorphism $f = f_{a,b} \colon E(\Omega) \rightarrow P$ with $f(a)=x$, $f(b)=y$ and $f(c)=1$ for all $c \in \Omega \setminus \{a,b\}$. Then $f$ is surjective and $f([a,b]) = [x,y] \neq 1$, so $E(\Omega)$ really does have class two and exponent~$p$.

\enref{Eprop-2}: Write $G = E(\Omega)$. Then $\Phi(G) = G'$ since the exponent is~$p$, and $G' \leq Z(G)$ since the class is two. And as all commutators are central we have $[g,hk] =[g,h] \cdot [g,k]$ and $[gh,k] = [g,k][h,k]$. So $G'$ is generated by the $[a,b]$  with $a,b \in \Omega$. If $g \in G \setminus G'$ then $f_{a,b}(g) \in P$ is noncentral for some $a,b \in \Omega$, so $G' = Z(G)$ since $f_{a,b}$ is surjective. Moreover as $f_{a,b}$ kills all commutators except $[a,b] = [b,a]^{-1}$ it follows that the $\binom{\abs{\Omega}}2$ commutators $[a,b]$ are linearly independent in the $\f$-vector space which is the elementary abelian $p$-group $G'$. Part~\enref{Eprop-3} follows since $G/\Phi(G)$ is elementary abelian of rank $\abs{\Omega}$\@. Finally \enref{Eprop-4} now follows by the proof of~\enref{Eprop-2}\@.
\end{proof}

\noindent
From now on we restrict ourselves to the prime three. Set
$H = E(\Omega)$ for $\Omega =\{x_1,x_2,x_3,y_1,y_2,y_3,z_1,z_2,z_3\}$. Define $G$ to be the
semidirect product $G = H \rtimes Q$, where $Q = C_3 \wr C_3 = \langle \xc,\yc,\zc,\tc \rangle$ is the  the Sylow $3$-subgroup of $S_9$, with the following action on~$H$:
\begin{itemize}
\item
The $x$-cycle $\xc$: $x_i^{\xc} = x_{i+1}$, $y_i^{\xc} = y_i$, $z_i^{\xc} = z_i$;
\item
The analogous $y$- and $z$-cycles $\yc$ and $\zc$;
\item
The top-level cycle $x_i^{\tc} =y_i$, $y_i^{\tc} = z_i$, $z_i^{\tc} = x_i$.
\end{itemize}
Since $\abs{H} = 3^{45}$ and $\abs{Q} = 3^4$ we have $\abs{G} = 3^{49}$.

\begin{lemma}
\label{lemma:ZG}
Let $G = H \rtimes Q$ be this $3$-group.
\begin{enumerate}
\item
\label{enum:ZG-1}
$Z(G)$ lies in $H'$ and is elementary abelian of rank two, generated by the $Q$-orbit products of $[x_1,x_2]$ (length $9$) and $[x_1,y_1]$ (length $27$)\@.
\item
\label{enum:ZG-2}
Suppose that $V$ is a faithful $\f[3]G$-module with no central quadratics. Then every quadratic element lies in~$H$.
\end{enumerate}
\end{lemma}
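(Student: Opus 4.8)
The plan is to treat the two parts separately, deducing \enref{ZG-2} only after \enref{ZG-1} has pinned down $Z(G)$.

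For \enref{ZG-1} I would first show $Z(G)\le H$. The image of $Z(G)$ in $G/H\cong Q$ must be central in $Q$, and since $Q=C_3\wr C_3$ acts faithfully on $\Omega$ with normal base group $\langle\xc,\yc,\zc\rangle$, one checks that $Z(Q)=\langle\xc\yc\zc\rangle$ is the diagonal of the base. But if $g=hq$ with $q=(\xc\yc\zc)^k$ and $k\not\equiv 0$, then $x_1^{\,g}\equiv x_1^{\,q}=x_{1+k}\not\equiv x_1$ modulo $\Phi(H)$, so $g$ cannot centralise $x_1$; hence the image is trivial and $Z(G)\le H$. Now $Z(G)\le Z(H)=H'$ by Lemma~\ref{lemma:Eprop}\,\enref{Eprop-2}, and as $H'$ is abelian, $Z(G)=C_{H'}(Q)=(H')^Q$. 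To compute this I would identify $H'$ with $\bigwedge^2 W$, where $W=H/\Phi(H)$ is the $\f[3]Q$-permutation module on $\Omega$, so that $[a,b]\leftrightarrow a\wedge b$. Then $Q$ has exactly two orbits on the $36$ commutators $[a,b]$: the within-block pairs (one orbit of length $9$, containing $[x_1,x_2]$) and the cross-block pairs (one orbit of length $27$, containing $[x_1,y_1]$). The product of the elements of a single conjugation orbit is fixed by $Q$ since $H'$ is abelian and $Q$ permutes the orbit, so both orbit products lie in $(H')^Q$; they are nonzero and supported on disjoint sets of generators, hence independent. To see they span I would split $\bigwedge^2 W$ into its $Q$-invariant within-block summand $\bigoplus_{\mathrm{blocks}}\bigwedge^2(\f[3]^3)$ and cross-block summand $(W_x\otimes W_y)\oplus(W_y\otimes W_z)\oplus(W_z\otimes W_x)$, where $W_x,W_y,W_z$ are the spans of the three blocks, and compute the fixed space of each: a $\tc$-invariant vector is determined by its component in one block, and the relevant $\xc$- (resp.\ $\xc,\yc$-) fixed space there is one-dimensional, so each summand contributes one dimension. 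Hence $\dim_{\f[3]}Z(G)=2$ and the orbit products generate.

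For \enref{ZG-2} the key elementary fact is that a quadratic element acts as $1+n$ with $n^2=0$, so $g^3$ acts trivially and, $V$ being faithful, $g^3=1$. Arguing by contradiction, suppose $g=hq\notin H$ is quadratic; then $1\ne q=\bar g\in Q$ has order $3$ and acts on $H'$ by a linear map $\rho$ with $(\rho-1)^3=0$, while $\rho\ne 1$ because $q$ contains a $3$-cycle on $\Omega$ and hence acts nontrivially on $\bigwedge^2 W$. The idea is to apply the Descent Lemma with the quadratic element $b=g$. For $a\in H'=Z(H)$ one has $[a,g]=(\rho-1)a$, lying in the abelian group $H'$. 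In the fortunate case where the image of $\rho-1$ meets $Z(G)$ nontrivially I would simply choose $a$ with $(\rho-1)a=:z\in Z(G)\setminus\{1\}$: then $z$ is central, so $z\in C_G(a,g)$, and the Descent Lemma forces $\langle g,z\rangle$ to be quadratic, producing a central quadratic~$z$ --- contradiction.

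The hard part is the general case, since for elements $q$ of small support (for instance $q=\xc$) the image of $\rho-1$ can miss $Z(G)$ entirely. Here I would first descend only to a \emph{non-central} quadratic: choosing $a\in H'$ with $(\rho-1)a\ne 0=(\rho-1)^2 a$ gives, via the Descent Lemma, a quadratic $c:=[a,g]\in H'$ fixed by $q$. I would then upgrade $c$ to a central quadratic. Its normal closure $\langle c^G\rangle=\langle c^Q\rangle\le H'$ is a nonzero $\f[3]Q$-module, so, $Q$ being a $3$-group, it has a nonzero fixed vector; pick $1\ne y\in\langle c^Q\rangle\cap Z(G)$. Following the pattern of the proof of Lemma~\ref{lemma:noRankOne}, $y^{\perp}$ is by \cite[Lemma~3.1]{ol3} a subgroup, normal in $C_G(y)=G$; so once I know $c\perp y$ it follows that $\langle c^G\rangle\le y^{\perp}$, whence $y\in y^{\perp}$ and $y$ is a central quadratic by \cite[Lemma~3.1]{ol3} --- the desired contradiction. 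Thus everything reduces to the orthogonality $c\perp y$, equivalently $[V,c,c^{q'}]=0$ for the conjugates occurring in $y$, and I expect this to be \emph{the main obstacle}. The tool I would use is again the Descent Lemma with the $\perp$-calculus of \cite{ol3}: when $(\rho'-1)^2 c=0$ the element $[q',c]=(1-\rho')c$ is $q'$-fixed and commutes with $c$, so $c\perp[q',c]=c-c^{q'}$ and hence $c\perp c^{q'}$; making this run for all relevant $q'$ appears to need a case distinction on the action of $q$ on $\bigwedge^2 W$ and careful commutator bookkeeping, which is where the real content of \enref{ZG-2} lies.
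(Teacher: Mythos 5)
Your part \enref{ZG-1} is correct and is essentially the paper's own argument: the paper simply counts the five $Q$-orbits on $\Omega^2$ and reads off the fixed points, while you make explicit the (genuinely needed, and in the paper tacit) verification that $Z(G)\le H$ and recast the fixed-point count in $H'\cong\bigwedge^2 W$; both computations are sound. Your ``fortunate case'' of \enref{ZG-2} also matches the paper's first step: for $g\notin K:=\langle H,\xc,\yc,\zc\rangle$ the paper takes $a=\prod_{i,j}[x_i,y_j][x_i,z_j]$ and computes $[a,g]$ to be (a power of) the central $Q$-orbit product of $[x_1,y_1]$, so there the image of $\rho-1$ does meet $Z(G)$; and you are right that for $g\in K\setminus H$ it does not.

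The general case, however, contains a genuine gap, and it sits exactly where you flagged it: the orthogonality $c\perp c^{q'}$ for all relevant $q'$ is not merely unfinished but cannot be derived from the data you have retained about $c$ (namely: $c\in H'$ quadratic, fixed by $\rho$, lying in the image of $\rho-1$). The paper's own module $V_0$ witnesses this. Take $q=\xc$ and $c=[x_1,x_2][x_2,x_3][x_3,x_1]$: this element is $\xc$-fixed and equals $(\xc-1)^2$ applied to $[x_1,x_2]$, hence lies in the image of $\xc-1$; it is quadratic on $V_0$, because on any summand $V_2(a,b;c,d)$ at most one of the three commutators acts nontrivially (any two of them share a generator, while $a,b,c,d$ are pairwise distinct), and then quadratically. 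Yet $c\not\perp c^{\tc}$: on $V_2(x_1,x_2;y_1,y_2)$ the elements $c$ and $c^{\tc}$ act as $[a_1,a_2]$ and $[b_1,b_2]$ respectively, whose product is non-quadratic by Lemma~\ref{lemma:V2}, forcing $[V_0,c,c^{\tc}]\neq 0$. Since $V_0$ is faithful with no central quadratics, any correct proof of your orthogonality must re-use the hypothetical quadratic $g$ itself, which your Descent-Lemma computations on the pairs $(q',c)$ never do; so the deferred ``case distinction and bookkeeping'' cannot succeed along the sketched lines. The paper sidesteps the upgrade-to-centre problem entirely: instead of descending from $g$ to an arbitrary quadratic $c$ and pushing it toward $Z(G)$, it chooses the witness in $H'$ so that the resulting commutator is \emph{already known} to be non-quadratic. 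Its first step simultaneously shows the relay element $a=\prod_{i,j}[x_i,y_j][x_i,z_j]$ is non-quadratic (its commutator with any $g\notin K$ is central and nontrivial); its second step takes, for $k\in K\setminus H$ with nontrivial $\xc$-component, $b'=\prod_i[x_1,y_i]^{-1}[x_1,z_i]^{-1}[x_2,y_i][x_2,z_i]$ and computes $[b',k]=a^{\pm1}$, so a second application of the Descent Lemma shows $k$ cannot be quadratic. Note that here $[b',k]$ is \emph{not} central --- the Descent Lemma only requires $[a,b]\in C_G(a,b)$ --- and this flexibility, which your central-upgrade plan forgoes, is what makes the two-stage relay work; replacing your general case by it repairs the proof.
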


\begin{proof}
Since $H$ is special we have $Z(H) = H'$.
The diagonal action of $Q$ partitions $\Omega^2$ into five orbits: three of length nine with representatives $(x_1,x_1)$, $(x_1,x_2)$ and $(x_2,x_1)$; and two of length $27$ with representatives $(x_1,y_1)$ and $(y_1,x_1)$.
Since $[x,x]=1$ and $[y,x] = [x,y]^{-1}$, this means that $Z(G) \cap H$ is the rank two elementary abelian described in~\enref{ZG-1}\@.

\enref{ZG-2}: Set $K = \langle H, \xc, \yc, \zc \rangle$. If $g \in G \setminus K$ then $g \in K \tc \cup K \tc^2$, so commuting $g$ with $\prod_{i,j} [x_i,y_j][x_i,z_j]$ yields the $Q$-orbit product of $[x_1, y_1]$. This is central, so $g$ and $\prod_{i,j} [x_i,y_j][x_i,z_j]$ are non-quadratic by the Descent Lemma.
And if $k \in K \setminus H$ then without loss of generality $k \in H \langle \yc, \zc \rangle \xc \cup H \langle \yc, \zc \rangle \xc^2$. So its commutator with $\prod_i [x_1,y_i]^{-1}[x_1,z_i]^{-1}[x_2,y_i][x_2,z_i]$ yields $\prod_{i,j} [x_i,y_j][x_i,z_j]$, which is non-quadratic. Hence $k$ is non-quadratic by the Descent Lemma. 
\end{proof}

\begin{proposition}
\label{prop:verify}
Let $G = H \rtimes Q$ be the $3$-group constructed above. Every faithful $\f[3] G$-module $V$ satisfies the Quadratic Conjecture.
\end{proposition}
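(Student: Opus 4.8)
The plan is to argue by contradiction: suppose some faithful $\f[3]G$-module $V$ is an $F$-module but has no quadratic element in $\Omega_1(Z(G)) = Z(G)$. First I would draw out two consequences of this assumption. On the one hand, Lemma~\ref{lemma:ZG}\,\enref{ZG-2} places every quadratic element of $G$ inside $H$. On the other hand, set $A = H' = Z(H)$ and $N = H$; both are normal in $G$ (as $H \trianglelefteq G$ and $H'$ is characteristic in $H$), with $A$ abelian, $[N,N] = H' = A$ and $[N,A] = 1$ because $H$ is special by Lemma~\ref{lemma:Eprop}\,\enref{Eprop-2}. Thus Hypothesis~\ref{hypo:NA} holds, and Proposition~\ref{prop:NA} tells us that no $E \in \mathcal{E}$ is weakly closed in $C_G(E)$ with respect to $G$, where $\mathcal{E}$ is the set of offenders $E \leq H$ with $\abs{E : E \cap H'} = 3$.

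Next I would establish the structural fact that, for this particular $H$, \emph{every} offender $E \leq H$ already lies in $\mathcal{E}$. By Lemma~\ref{lemma:Eprop}\,\enref{Eprop-2} the commutators $[e_i,e_j]$ with $e_i,e_j \in \Omega$ are linearly independent in $H'$, so the alternating commutator pairing $H/H' \times H/H' \to H'$ is the generic one. Expanding $[u,v]$ in coordinates relative to $\Omega$ then shows that $[u,v] = 1$ forces the images $\bar u, \bar v$ in $H/H'$ to be linearly dependent, so any elementary abelian subgroup of $H$ has image of dimension at most one in $H/H'$, i.e.\ $\abs{E : E \cap H'} \leq 3$. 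Since by the Normal Abelian Lemma no offender lies in the abelian normal subgroup $H' = A$, an offender $E \leq H$ must satisfy $\abs{E : E \cap H'} = 3$ and so belongs to $\mathcal{E}$. Combining this with Lemma~\ref{lemma:ZG}\,\enref{ZG-2}, every quadratic offender of $G$ lies in $\mathcal{E}$.

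To reach a contradiction it now suffices to produce a \emph{weakly closed} quadratic offender. Here I would invoke the reduction behind \cite[Proposition~4.5]{ol3}: from a faithful $F$-module with no central quadratics one extracts an elementary abelian subgroup $1 \neq E \leq G$ that is a quadratic offender and is weakly closed in $C_G(E)$ with respect to $G$ (this is precisely the datum fed into the Weakly Closed Conjecture). Since $V$ is assumed to be an $F$-module without central quadratics, such an $E$ exists; by the previous paragraph $E \in \mathcal{E}$, contradicting Proposition~\ref{prop:NA}. Hence no faithful $\f[3]G$-module can be an $F$-module without central quadratics, which is exactly the Quadratic Conjecture for every faithful~$V$.

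The main obstacle, and the one place where I lean on machinery outside this excerpt, is the production of a weakly closed quadratic \emph{offender} in the third step. The delicate point is not weak closure itself but guaranteeing that the weakly closed quadratic subgroup supplied by the general reduction is genuinely an offender, so that it lands in $\mathcal{E}$ and Proposition~\ref{prop:NA} bites; the module $V_0$, which carries a weakly closed quadratic subgroup that is \emph{not} an offender while failing to be an $F$-module at all, shows that this distinction is essential and is exactly why $G$ can violate the Weakly Closed Conjecture yet satisfy the Quadratic one. Once a weakly closed quadratic offender is in hand, Lemma~\ref{lemma:ZG} and the free-commutator computation make the passage into $\mathcal{E}$ entirely routine.
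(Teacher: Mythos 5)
Your proposal is correct and takes essentially the same route as the paper: argue by contradiction, use the reduction of \cite[Theorem 4.5]{ol3} to obtain a weakly closed quadratic offender, place it inside $H$ via Lemma~\ref{lemma:ZG}\,\enref{ZG-2}, verify Hypothesis~\ref{hypo:NA} with $N = H$ and $A = H'$, show the offender lies in $\mathcal{E}$, and conclude by Proposition~\ref{prop:NA}. Your commutator-pairing argument that $[u,v]=1$ forces $\bar u,\bar v$ to be linearly dependent in $H/H'$ is exactly the fact the paper phrases as $C_H(g) = \langle g, H' \rangle$ for $g \in H \setminus H'$, so even this step coincides.
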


\begin{proof}
Suppose not. Then $V$ has no central quadratics and yet there is an offender $E \leq G$. By \cite[Theorem 4.5]{ol3} we may assume that $E$ is quadratic and weakly closed in $C_G(E)$ with respect to $G$. So $E \leq H$ by Lemma~\ref{lemma:ZG}\,\enref{ZG-2}\@.

Now observe that $G$ satisfies Hypothesis~\ref{hypo:NA} with $N = H$, $A = H'$. Since $E$ is a weakly closed offender, we will be done by Proposition~\ref{prop:NA} once we can show that $E \in \mathcal{E}$\@. Certainly $E \nleq H'$, by the Normal Abelian Lemma. And observe that by construction of $H$, we have $C_H(g) = \langle g, H' \rangle$ for all $g \in H \setminus H'$. So since $E$ is elementary abelian we have $\abs{E \colon E \cap A} \leq p$. So $E \in \mathcal{E}$, as claimed.
\end{proof}

\section{Constructing the module \texorpdfstring{$V_0$}{V0}}
\label{section:module}
\noindent
As above we restrict our attention to the case $p=3$.
Recall that $E(a,b)$ is extraspecial. We first construct a useful representation of $E(a,b) \times E(c,d)$.

\begin{lemma}
\label{lemma:V1}
Consider the representation $V_1(a,b)$ of $E(a,b)$ in $\GL_3(\f)$ given
by
\begin{xalignat*}{2}
a & \mapsto \begin{pmatrix} 1 & 1 & 0 \\ 0 & 1 & 0 \\ 0 & 0 & 1 \end{pmatrix}
&
b & \mapsto \begin{pmatrix} 1&0&0 \\ 0&1&1 \\ 0&0&1 \end{pmatrix}
\end{xalignat*}
Then $c := [a,b]$ acts as the matrix
$\begin{pmatrix} 1&0&1 \\ 0&1&0 \\ 0&0&1 \end{pmatrix}$.
This representation is faithful, and the elementary abelian
subgroups $\langle a,c \rangle$ and $\langle b,c \rangle$ are quadratic.
\end{lemma}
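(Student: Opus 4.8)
The plan is to view $V_1(a,b)$ as a homomorphism $\phi\colon E(a,b)\to \GL_3(\f)$ and to reduce every assertion to the multiplication rules of the elementary matrices. Let $e_{ij}$ denote the matrix with a single entry $1$ in position $(i,j)$, so that $\phi(a)=A:=\Id+e_{12}$ and $\phi(b)=B:=\Id+e_{23}$. The facts I would use throughout are $e_{12}^2=e_{23}^2=e_{13}^2=0$, $e_{12}e_{23}=e_{13}$, and $e_{23}e_{12}=e_{12}e_{13}=e_{13}e_{12}=e_{13}e_{23}=e_{23}e_{13}=0$.

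First I would check that $\phi$ is well defined. The image lies in the group $U$ of upper unitriangular matrices, and for $p=3$ one computes from the rules above that $U$ has class two and exponent three; hence every subgroup of $U$ has class at most two and exponent dividing three. By the universal property of the free class two exponent $p$ group $E(a,b)$ (Lemma~\ref{lemma:Eprop}), the assignment $a\mapsto A$, $b\mapsto B$ therefore extends to a homomorphism $\phi$. The commutator claim is then a direct calculation: with $A^{-1}=\Id-e_{12}$ and $B^{-1}=\Id-e_{23}$ one finds $[A,B]=A^{-1}B^{-1}AB=\Id+e_{13}$, which is the asserted matrix for $c$, and which is central in $U$ because $e_{13}$ both annihilates and is annihilated by $e_{12}$ and $e_{23}$.

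For faithfulness I would show $\Bild\phi=U$ and then count orders. Setting $C:=\Id+e_{13}=\phi(c)$, the subgroup $\langle A,B\rangle$ contains $C$, and modulo the centre $\langle C\rangle$ of $U$ the images of $A$ and $B$ generate $U/\langle C\rangle\cong\f^2$; hence $\langle A,B\rangle=U$. Since $\abs{E(a,b)}=p^3=\abs{U}$ by Lemma~\ref{lemma:Eprop}, the surjection $\phi$ is an isomorphism and so is faithful. (Equivalently, as the centre is the unique minimal normal subgroup of the extraspecial group $E(a,b)$, it is enough that $\phi(c)=C\neq\Id$.)

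Finally the quadratic assertions, which are again immediate from the matrix rules. Every element of $\phi(\langle a,c\rangle)$ has the form $A^iC^j=\Id+ie_{12}+je_{13}$, so $g-\Id$ lies in the $\f$-span of $e_{12}$ and $e_{13}$; any product of two such matrices vanishes, giving $(g-\Id)(h-\Id)=0$ for all $g,h\in\phi(\langle a,c\rangle)$, that is $[V_1(a,b),\langle a,c\rangle,\langle a,c\rangle]=0$. The identical argument with $e_{23}$ in place of $e_{12}$ handles $\langle b,c\rangle$. I do not expect a real obstacle here: the entire lemma is governed by the three relations among $e_{12},e_{23},e_{13}$. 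The only points meriting care are invoking the universal property of $E(a,b)$ for well-definedness rather than checking relations by hand, and verifying the full condition $[V,E,E]=0$ rather than merely that each nonidentity element is quadratic --- though these are equivalent by \cite[Corollary~3.2]{ol3}, and both hold at once here.
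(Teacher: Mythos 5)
Your proof is correct and takes essentially the same approach as the paper, which likewise deduces faithfulness from the fact that $c$ acts nontrivially (your parenthetical remark about the centre being the unique minimal normal subgroup of $E(a,b)$) and verifies quadraticity by observing that every matrix in $\langle a,c\rangle$, respectively $\langle b,c\rangle$, differs from the identity by an element of the span of $e_{12},e_{13}$, respectively $e_{23},e_{13}$, any product of two of which vanishes. Your extra scaffolding --- well-definedness via the universal property of $E(a,b)$ and the surjectivity-plus-order-count argument --- is sound but only makes explicit what the paper leaves implicit.
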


\begin{proof}
Since $c$ acts nontrivially the representation is faithful.
Every matrix in $\langle a, c \rangle$ is of the form $\begin{pmatrix} 1 & *  & * \\ 0 & 1 & 0 \\ 0 & 0 & 1 \end{pmatrix}$ and is therefore quadratic. Similarly, every matrix in $\langle b, c \rangle$ has the form $\begin{pmatrix} 1 & 0 & * \\ 0 & 1 & * \\ 0 & 0 & 1 \end{pmatrix}$.
\end{proof}

\begin{lemma}
\label{lemma:V2}
Consider the representation $V_2(a_1,a_2;b_1,b_2)$ of
$E(a_1,a_2) \times E(b_1,b_2)$ in $\GL_9(\f)$ given by
\[
V_2(a_1,a_2;b_1,b_2) := V_1(a_1,a_2) \otimes_{\f} V_1(b_1,b_2) \, .
\]
Then $V_2$ is faithful; $\langle a_i, [a_1,a_2] \rangle$ and $\langle b_i, [b_1,b_2] \rangle$ are quadratic for $i=1,2$; and
$[a_1,a_2][b_1,b_2]$ does not act quadratically.
\end{lemma}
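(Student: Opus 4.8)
The plan is to exploit the way the tensor product separates the actions of the two direct factors, reducing everything to the matrix data already supplied by Lemma~\ref{lemma:V1}. Write $\rho$ and $\rho'$ for the two copies of that representation, so that $E(a_1,a_2)$ acts on $V_2$ by $g \mapsto \rho(g)\otimes I$ and $E(b_1,b_2)$ acts by $h \mapsto I \otimes \rho'(h)$.

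For faithfulness, I would first note that every matrix in the image of $\rho$ or $\rho'$ is unipotent, since $E(a_1,a_2)$ and $E(b_1,b_2)$ are $3$-groups acting in characteristic~$3$. An element $(g,h)$ acts trivially exactly when $\rho(g)\otimes\rho'(h) = I\otimes I$, which forces $\rho(g)$ and $\rho'(h)$ to be mutually inverse scalar matrices; but a unipotent scalar matrix is the identity, so $\rho(g)=\rho'(h)=I$, and hence $g=h=1$ by the faithfulness asserted in Lemma~\ref{lemma:V1}.

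For the quadratic subgroups, observe that for $g \in \langle a_i,[a_1,a_2]\rangle$ the operator $g-1$ acts on $V_2$ as $(\rho(g)-I)\otimes I$. Hence for any two such $g,g'$ the composite $(g-1)(g'-1)$ acts as $\bigl[(\rho(g)-I)(\rho(g')-I)\bigr]\otimes I$, which vanishes because $\langle a_i,[a_1,a_2]\rangle$ already acts quadratically on $V_1(a_1,a_2)$ by Lemma~\ref{lemma:V1}. Thus $[V_2,E,E]=0$ for $E=\langle a_i,[a_1,a_2]\rangle$, and the identical argument with the two tensor factors interchanged settles $\langle b_i,[b_1,b_2]\rangle$.

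The remaining point, and the one needing genuine care, is the non-quadraticity of $c := [a_1,a_2][b_1,b_2]$. Since $[a_1,a_2]$ and $[b_1,b_2]$ both act through the common matrix $M$ representing $[a,b]$ in Lemma~\ref{lemma:V1}, the element $c$ acts as $M\otimes M$. Writing $M=I+D$, the nilpotent part $D$ has a single nonzero entry, so $D^2=0$; expanding $M\otimes M - I\otimes I = D\otimes I + I\otimes D + D\otimes D$ and squaring, every term is killed by $D^2=0$ except the cross term $2(D\otimes I)(I\otimes D)=2\,(D\otimes D)$. The crux is that this coefficient $2$ survives, which is exactly where the assumption that $p$ is odd enters: in characteristic~$2$ the square would vanish and $c$ would be quadratic. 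Since $2\neq 0$ in $\f$ and $D\otimes D\neq 0$, we obtain $(c-1)^2\neq 0$, so $c$ does not act quadratically.
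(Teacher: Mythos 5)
Your proof is correct, and its overall skeleton is the paper's: funnel everything through the tensor decomposition and the matrix data of Lemma~\ref{lemma:V1}. You diverge in two places, most substantially on faithfulness. The paper only computes the action of the central elements $[a_1,a_2]^r[b_1,b_2]^s$ on the single vector $e_3\otimes e_3$, tacitly invoking the fact that a nontrivial kernel, being normal in a $p$-group, meets the centre; you instead prove injectivity on the whole group by pure linear algebra, using that $A\otimes B=I$ forces $A$ and $B$ to be mutually inverse scalar matrices and that a unipotent scalar is the identity. Your version is more self-contained (no $p$-group fact needed) and works for any group acting unipotently, while the paper's shorter central-element computation gets recycled almost verbatim in the proof of Theorem~\ref{thm:V}\,\enref{V-1} for the elements $c_1c_2^r$. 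For the non-quadraticity of $c=[a_1,a_2][b_1,b_2]$, the paper evaluates $(c-1)^2$ on the witness vector $e_3\otimes e_3$, obtaining $2e_1\otimes e_1\neq 0$, whereas you compute the full operator $(c-1)^2=2\,D\otimes D$ with $D=M-I$; this is the same calculation in structural form ($e_3\otimes e_3$ is precisely a vector not annihilated by $D\otimes D$), and it has the merit of isolating exactly where odd characteristic enters, as you observe. One microscopic point: your step ``$D$ has a single nonzero entry, so $D^2=0$'' needs that entry to be off-diagonal --- true here since $D$ is the elementary matrix with a $1$ in position $(1,3)$ --- as a single diagonal entry would not square to zero; you should say ``strictly upper triangular'' rather than ``single nonzero entry''.
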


\begin{proof}
Let $e_1,e_2,e_3$ be the standard basis of $\f^3$. The centre of $E(a_1,a_2) \times E(b_1,b_2)$ is elementary abelian of rank two generated by $[a_1,a_2]$ and $[b_1,b_2]$. Since $[a_1,a_2]^r [b_1,b_2]^s$ sends $e_3 \otimes e_3$ to $e_3 \otimes e_3 + r e_1 \otimes e_3 + s e_3 \otimes e_1 + rs e_1 \otimes e_1$, the representation is faithful. The subgroups $\langle a_i, [a_1,a_2] \rangle$ and $\langle b_i, [b_1,b_2] \rangle$ are quadratic by Lemma~\ref{lemma:V1}, since each only operates on one factor of $V_2$. For $g = [a_1,a_2][b_1,b_2]$ we have $[e_3 \otimes e_3, g, g] = (e_3 \otimes e_3)(g-1)^2$ and
\[
(e_3 \otimes e_3)(g-1)^2 = (e_1 \otimes e_3 + e_3 \otimes e_1 + e_1 \otimes e_1)(g-1) = 2 e_1 \otimes e_1 \neq 0 \, .
\]
So $g$ is not quadratic.
\end{proof}

\ignore{
The orbits of $(x_1,x_2,y_1,y_2)$,
$(x_1,y_1,x_2,y_2)$ and $(x_1,x_2,y_1,z_1)$ have lengths $27$, $27$ and $81$ respectively. Denote by $\mathcal{B} \subseteq F_4(\Omega)$ the union of these three orbits.

\begin{lemma}
\label{lemma:orbitsCover}
For all $a,b \in \Omega$ with $a \neq b$ there are $c,d \in \Omega$ such that either $(a,b,c,d)$ or $(b,a,c,d)$ lies in the orbit union $\mathcal{B}$.
\end{lemma}
} 

\noindent
We now use several copies of $V_2$ to construct a faithful representation of $G$.
Write $F_4(\Omega)$ for the subset of $\Omega^4$ consisting of those $4$-tuples whose components are pairwise distinct. Note that $Q$ acts on $F_4(\Omega)$ via its diagonal action on $\Omega^4$.
For each $(a,b,c,d) \in F_4(\Omega)$ we may view $V_2(a,b;c,d)$ as a representation of~$H$, with each generator in $\Omega \setminus \{a,b,c,d\}$ acting as the identity.
So the direct sum
\[
V_0 = \bigoplus_{(a,b,c,d) \in F_4(\Omega)} V_2(a,b;c,d) \, ,
\]
is a representation of~$H$, as each summand is; and as $Q$ permutes the summands, $V_0$ is a representation of $G$ too.

\begin{theorem}
\label{thm:V}
$G$ and $V_0$ have the following properties:
\begin{enumerate}
\item
\label{enum:V-1}
$V_0$ is faithful and there are no quadratic elements in the centre.
\item 
\label{enum:V-3}
The subgroup
\[
E := \langle x_1, [x_1,a] \mid a \in \Omega\} \rangle \leq G
\]
is elementary abelian of rank~$9$. It is weakly closed in $C_G(E)$ with
respect to~$G$.
\item
\label{enum:V-4}
$E$ has quadratic action on~$V_0$.
\end{enumerate}
Hence $(G,V_0,E)$ is a counterexample to the Weakly Closed Conjecture.
\end{theorem}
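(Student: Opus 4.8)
The plan is to verify the three numbered properties in order, since each subsequent claim builds naturally on the structure established earlier.

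First, for \enref{V-1}, faithfulness of $V_0$ follows from faithfulness of the individual summands $V_2(a,b;c,d)$ established in Lemma~\ref{lemma:V2}, together with the observation that by Lemma~\ref{lemma:Eprop}\,\enref{Eprop-4} every noncentral element of $H$ is detected on some extraspecial projection $E(a,b)$, and each generator $[a,b]$ of $Z(H)$ acts nontrivially on the summand $V_2(a,b;c,d)$ for suitable $c,d$. The key point I would need is that the $Q$-action permuting summands means a nontrivial $g \in Q$ moves some summand, giving nontriviality. For the absence of central quadratics, I would invoke Lemma~\ref{lemma:ZG}\,\enref{ZG-1}: the centre $Z(G)$ is the rank-two group generated by the $Q$-orbit products of $[x_1,x_2]$ and $[x_1,y_1]$. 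I must check each of these orbit products acts non-quadratically on $V_0$. For the orbit product of $[x_1,y_1]$, one checks that it restricts on a summand $V_2(x_1,y_1;c,d)$ (or similar) to a product of the form $[a_1,a_2][b_1,b_2]$ treated in Lemma~\ref{lemma:V2}, which is non-quadratic; likewise for the other orbit product. This comparison with Lemma~\ref{lemma:V2} is where the main care is needed.

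Next, for \enref{V-3}, that $E$ is elementary abelian of rank nine is a direct computation inside the special group $H$: the element $x_1$ together with the eight distinct commutators $[x_1,a]$ (as $a$ ranges over $\Omega\setminus\{x_1\}$, noting $[x_1,x_1]=1$) are independent in $H$, and all of $[x_1,a]$ lie in $Z(H)=H'$, so they commute with $x_1$ and each other; thus $\abs{E}=3^9$. The harder part of \enref{V-3} is weak closure of $E$ in $C_G(E)$ with respect to $G$. I would first identify $C_G(E)$ and then show that any $G$-conjugate $E^g$ contained in $C_G(E)$ equals $E$. Since $E \leq H$ and $H$ is special with $[H,H']=1$, the centralizer computation should reduce to understanding which conjugates of $x_1$ lie in $C_G(E)$; a conjugate $E^g$ commuting with $E$ forces $[x_1,x_1^g]=1$, and I would use the detection by extraspecial quotients to pin down $x_1^g$ modulo $H'$ and hence force $E^g=E$. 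This weak-closure verification is the step I expect to be the main obstacle, as it requires careful control of conjugation in $G=H\rtimes Q$.

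Finally, for \enref{V-4}, quadratic action of $E$ on $V_0$: since $E$ is elementary abelian, by \cite[Corollary 3.2]{ol3} it suffices to show $[V_0,E,E]=0$, equivalently that every generator acts quadratically and these actions satisfy the perpendicularity needed. On each summand $V_2(a,b;c,d)$, the generator $x_1$ acts either trivially or (when $x_1\in\{a,b,c,d\}$) as one tensor factor, and the commutators $[x_1,a]$ act as central elements of the relevant extraspecial factor. By Lemma~\ref{lemma:V1}, subgroups of the form $\langle a_i,[a_1,a_2]\rangle$ are quadratic, so the restriction of $E$ to each summand lands in such a quadratic subgroup on each factor, giving $[V_2,E,E]=0$ summandwise and hence $[V_0,E,E]=0$. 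Once \enref{V-1}, \enref{V-3}, \enref{V-4} are in place, the element $x_1\in E\setminus H'$ shows $E$ is a nontrivial quadratic subgroup weakly closed in its centralizer, while the absence of central quadratics from \enref{V-1} means the hypothesis of the Weakly Closed Conjecture holds but its conclusion fails, so $(G,V_0,E)$ is the desired counterexample.
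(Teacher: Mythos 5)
Your treatment of \enref{V-1} contains the one genuine gap in the proposal: you propose to verify non-quadraticity only for the two generators $c_1$ (the $Q$-orbit product of $[x_1,x_2]$) and $c_2$ (that of $[x_1,y_1]$) of the rank-two group $Z(G)$. But the claim is that \emph{no} nontrivial element of the centre is quadratic, and non-quadraticity is not preserved under products, so the mixed elements $c_1c_2^{\pm 1}$ --- which together with $c_1$ and $c_2$ exhaust $Z(G)\setminus\{1\}$ up to taking powers (and powers of quadratic elements are quadratic, since $g^k-1=(g-1)(1+g+\cdots+g^{k-1})$) --- must be checked separately. The paper devotes an explicit computation to exactly this case: $c_1c_2^{r}$ has image $[x_1,x_2][y_1,z_1]^{r}$ in $E(x_1,x_2)\times E(y_1,z_1)$, and on $V_2(x_1,x_2;y_1,z_1)$ one verifies $(e_3\otimes e_3)(g-1)^2=2r\,e_1\otimes e_1\neq 0$, a computation not literally covered by Lemma~\ref{lemma:V2} because of the inverse. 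Your gap is repairable --- for instance, every pair in the $Q$-orbit of $(x_1,y_1)$ is killed by the projection $H\to E(x_1,x_2)\times E(y_1,y_2)$, so $c_2$ acts trivially on $V_2(x_1,x_2;y_1,y_2)$ and $c_1c_2^{r}$ acts there exactly as the non-quadratic $[x_1,x_2][y_1,y_2]$ --- but as written your argument simply omits these elements. (Your faithfulness reduction is fine, modulo noting that a general element $hq$ with $1\neq q\in Q$ permutes the summands as $q$ does; the paper sidesteps this by observing that the kernel, being normal, would meet $Z(G)$.)

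For \enref{V-3} you correctly identify the strategy but explicitly leave the weak-closure verification as an unresolved obstacle; the missing steps are short and should be supplied: $F:=E\cap H'=\{[x_1,h]\mid h\in H\}$ is central in $H$ and $x_1^h=x_1[x_1,h]\in E$, so $H$ normalizes $E$, while $\yc,\zc$ fix $x_1$ and permute the $[x_1,a]$, giving $[E,\langle H,\yc,\zc\rangle]\leq E$; and for $g\notin\langle H,\yc,\zc\rangle$ the induced permutation of $\Omega$ moves $x_1$, so $x_1^g\in H'a$ for some $a\in\Omega\setminus\{x_1\}$ and $[x_1,x_1^g]=[x_1,a]\neq 1$ by Lemma~\ref{lemma:Eprop}\,\enref{Eprop-2}, whence $E^g\not\leq C_G(E)$. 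By contrast, your \enref{V-4} argument is correct and genuinely different from the paper's: you observe that on each summand $V_2(a,b;c,d)$ the image of $E$ lies in a single tensor factor, inside a subgroup of the form $\langle a_i,[a_1,a_2]\rangle$ handled by Lemma~\ref{lemma:V1}, so $[V_0,E,E]=0$ summandwise; the paper instead writes each $1\neq e\in E$ as an element of $\langle x_1,[x_1,g]\rangle$ for some $g\in H$ and invokes the Descent Lemma together with \cite[Corollary 3.2]{ol3}. Your route is more self-contained and module-specific here, but it does not compensate for the gap in \enref{V-1}.
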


\begin{proof}
\enref{V-1}: If the representation has a kernel then this meets $Z(G)$. Pick $1 \neq g \in Z(G)$. Replacing $g$ by a power if necessary we may assume by Lemma~\ref{lemma:ZG}\,\enref{ZG-1} that $g$ is one of the following: the $Q$-orbit product $c_1$ of $[x_1,x_2]$; the $Q$-orbit product $c_2$ of $[x_1,y_1]$; or $c_1 c_2^r$ for $r = \pm1$.
If $g = c_1$ then by Lemma~\ref{lemma:V2} its action on $V_2(x_1,x_2;y_1,y_2)$ is neither trivial nor quadratic, for the image of $c_1$ in $E(x_1,x_2) \times E(y_1,y_2)$ is $[x_1,x_2][y_1,y_2]$. Similarly the action of $c_2$ on $V_2(x_1,y_1;x_2,y_2)$ is neither trivial nor quadratic. Finally $g = c_1 c_2^r$ has image $[x_1,x_2][y_1,z_1]^r$ in $E(x_1,x_2) \times E(y_1,z_1)$, and so for the action of $g$ on $V_2(x_1,x_2;y_1,z_1)$ we have
\[
[e_3 \otimes e_3, g, g] = (e_3 \otimes e_3)(g-1)^2 = (r e_3 \otimes e_1 + e_1 \otimes e_3 + r e_1 \otimes e_1)(g-1) = 2r e_1 \otimes e_1 \neq 0 \, ,
\]
so the action of $g$ is neither trivial nor quadratic.

\enref{V-3}
From Lemma~\ref{lemma:Eprop} it follows that $E$ is elementary abelian of rank nine. Weakly closed: We have $[E, \langle H, \yc, \zc \rangle] \leq E$, and if
$g \in G \setminus \langle H, \yc, \zc \rangle$ then $x_1^g \in H'a$ for some $a \in \Omega \setminus \{x_1\}$, whence $[x_1,x_1^g] \neq 1$.

\enref{V-4}
$x_1$ quadratic: If no $a_i$ is $x_1$, then $x_1$ acts trivially on $V_2(a_1,a_2;a_3,a_4)$. If $x_1$ is amongst the~$a_i$, then $x_1$ acts quadratically by Lemma~\ref{lemma:V2}. In particular the action on $V_2(x_1,x_2;y_1,y_2)$ is nontrivial.

$E$ quadratic:
We have $E = \langle x_1 \rangle \times F$ for $F = E \cap H'$.
Since $[x_1,g][x_1,h] = [x_1,gh]$ for $g,h \in H$ it follows that $F = \{ [x_1,g] \mid g \in H \}$. So for each $1 \neq e \in E$ there is an $a \in \Omega \setminus {x_1}$ with $e \in F_a = \langle x_1, [x_1,a] \rangle$\@. Since $[x_1,a] \neq 1$ and $x_1$ is quadratic, it follows from the Descent Lemma that $F_a$ is quadratic too. So $e$ is quadratic.
\end{proof}


\end{document}